\newtheorem{theorem}{Theorem}
\theoremstyle{plain}
\newtheorem{example}{Example}
\newtheorem{remark}{Remark}
\numberwithin{equation}{section}
 \numberwithin{equation}{section}
\begin{document}
\title[Refined Heinz operator inequalities]{Refined Heinz operator inequalities and norm inequalities}

\author[A.G. Ghazanfari]{A. G. Ghazanfari}

\address{Department of Mathematics, Lorestan University, P.O. Box 465, Khoramabad, Iran.}

\email{ghazanfari.a@lu.ac.ir}

\subjclass[2010]{47A30, 47A63, 15A45.}

\keywords{Norm inequality, Operator inequality, Heinz mean.}

\begin{abstract}
In this article we study the Heinz and Hermite-Hadamard inequalities. We derive the
whole series of refinements of these inequalities involving unitarily invariant norms, which improve some recent results, known from the literature.

We also prove that if $A , B, X\in M_n(\mathbb{C})$ such that $A$ and $B$ are positive definite
and $f$ is an operator monotone function on $(0,\infty)$. Then
\begin{equation*}
|||f(A)X-Xf(B)|||\leq \max\{||f'(A)||, ||f'(B)||\} |||AX-XB|||.
\end{equation*}
Finally we obtain a series of refinements of the Heinz
operator inequalities, which were proved by Kittaneh and Krni\'c.

\end{abstract}
\maketitle

\section{Introduction and preliminaries}

Let $M_{m,n}(\mathbb{C})$ be the space of $m\times n$ complex matrices and $M_n(\mathbb{C})=M_{n,n}(\mathbb{C})$. Let $|||.|||$ denote any unitarily invariant
norm on $M_n(\mathbb{C})$. So,
$|||UAV|||=|||A|||$ for all $A\in M_n(\mathbb{C})$  and for all unitary matrices $U,V\in M_n(\mathbb{C})$. The Hilbert-Schmidt and trace class norm
of $A=[a_{ij}]\in M_n(\mathbb{C})$ are denoted by
\begin{equation*}
\|A\|_2=\left(\sum_{j=1}^n s_j^2(A)\right)^{\frac{1}{2}},\left\| A\right\|_1=\sum_{j=1}^n s_j(A)
\end{equation*}
 where $s_1(A)\geq s_2(A)\geq...\geq s_n(A)$ are the singular values of $A$, which are the eigenvalues of the positive semidefinite
 matrix $\mid A\mid =(A^*A)^{\frac{1}{2}}$, arranged in decreasing order and repeated according to multiplicity.
For Hermitian matrices $A,B\in M_n(\mathbb{C})$, we write that $A\geqslant  0$
if $A$ is positive semidefinite, $A>0$ if $A$ is positive definite, and $A\geqslant B$ if $A-B\geqslant 0$.

The Heron means introduced by Bhatia in \cite{bha} as follows:
\begin{equation*}
K_{\nu}(a,b)=(1-\nu)\sqrt{ab}+\nu \frac{a+b}{2}, \ \ \  0\leq \nu \leq 1.
\end{equation*}
Bhatia derived the inequality
\begin{equation*}
H_{\nu}(a,b)\leq K_{\alpha(\nu)}(a,b),
\end{equation*}
where $\alpha(\nu)=1-4(\nu-\nu ^2)$.

The another one of means that interpolates between
the geometric and the arithmetic means is the logarithmic mean:

\begin{equation*}
 L(a,b)=\int_0^1a^\nu b^{1-\nu} d\nu.
\end{equation*}

Drissi in \cite{dri} showed that $\frac{\sqrt{3}-1}{2\sqrt{3}}\leq \nu\leq \frac{\sqrt{3}+1}{2\sqrt{3}}$ if and only if
\begin{equation}\label{1.2}
H_{\nu}(a,b)\leq L(a,b).
\end{equation}

R. Kaur and M. Singh \cite{kau} have proved that for $A,B,X \in M_{n}$, such that $A,B$ are positive definite,
then for any unitarily invariant norm $|||.|||$, and $\frac{1}{4}\leq \nu \leq \frac{3}{4}$ and $\alpha \in [\frac{1}{2},\infty)$,the following inequality holds
\begin{equation}\label{1.3}
\frac{1}{2}|||A^\nu XB^{1-\nu}+A^{1-\nu}XB^\nu|||\leq \left|\left|\left|(1-\alpha)A^{\frac{1}{2}}XB^{\frac{1}{2}}+\alpha\left(\frac{AX+XB}{2}\right)\right|\right|\right|.
\end{equation}
They also proved the following result:
\begin{align}\label{1.4}
|||A^{\frac{1}{2}}XB^{\frac{1}{2}}|||
&\leq \frac{1}{2}|||A^{\frac{2}{3}}XB^{\frac{1}{3}}+A^{\frac{1}{3}}XB^{\frac{2}{3}}|||\notag\\
&\leq \frac{1}{2+t}|||AX+XB+tA^{\frac{1}{2}}XB^{\frac{1}{2}}|||,
\end{align}
where $A,B,X\in M_{n}$, $A,B$ are positive definite and $-2<t\leq 2$.\\
Obviously, if $A,B,X\in M_{n}$, such that $A,B$ are positive definite, then for $\frac{1}{4}\leq \nu \leq \frac{3}{4}$ and $\alpha \in [\frac{1}{2},\infty)$, and any unitarily invariant norm $|||.|||$, the following inequalities hold
\begin{align}\label{1.5}
|||A^{\frac{1}{2}}XB^{\frac{1}{2}}|||
&\leq \frac{1}{2}|||A^{\nu}XB^{1-\nu}+A^{1-\nu}XB^{\nu}|||\notag\\
&\leq \left|\left|\left|(1-\alpha)A^{\frac{1}{2}}XB^{\frac{1}{2}}+\alpha\left(\frac{AX+XB}{2}\right)\right|\right|\right|,
\end{align}
Suppose that
\begin{equation*}
g_\circ(\nu)=\left|\left|\left|\frac{A^\nu XB^{1-\nu}+A^{1-\nu}XB^\nu}{2}\right|\right|\right|,
\end{equation*}
and
\begin{equation*}
f_\circ(\alpha)=\left|\left|\left|(1-\alpha)A^{\frac{1}{2}}XB^{\frac{1}{2}}+\alpha\left(\frac{AX+XB}{2}\right)\right|\right|\right|.
\end{equation*}
Then, the inequalities \eqref{1.3}, \eqref{1.4},\eqref{1.5}, can be simply rewritten respectively as follows
\begin{align}\label{1.6}
&g_\circ(\nu)\leq f_\circ(\alpha),\notag\\
&g_\circ\left(\frac{1}{2}\right)\leq g_\circ\left(\frac{2}{3}\right)\leq f_\circ\left(\frac{2}{2+t}\right),\\
&g_\circ\left(\frac{1}{2}\right)\leq g_\circ(\nu)\leq f_\circ(\alpha),\notag
\end{align}

I. Ali, H. Yang and A. shakoor \cite{ali} gave a refinement of the inequality \eqref{1.5} as follows:
\begin{equation}\label{1.7}
g_\circ(\nu)\leq (4r_{0}-1)g_\circ\left(\frac{1}{2}\right)+2(1-2r_{0})f_\circ(\alpha),
\end{equation}
where $\frac{1}{4}\leq \nu \leq \frac{3}{4}$, $\alpha \in [\frac{1}{2},\infty )$ and $r_{0}=\min\{\nu,1-\nu\}$.

Kittaneh \cite{kit}, gave a generalization of the Heinz inequality using convexity and
the Hermite–Hadamard integral inequality for $0\leq\nu\leq1$, as follows:
\begin{align}\label{1.8}
2|||A^{\frac{1}{2}}XB^{\frac{1}{2}}|||
&\leq \frac{1}{|1-2\nu|}\left|\int_\nu^{1-\nu}|||A^{t}XB^{1-t}+A^{1-t}XB^{t}|||dt\right|\notag\\
&\leq |||A^{\nu}XB^{1-\nu}+A^{1-\nu}XB^{\nu}|||,
\end{align}
A refinement of \eqref{1.8} is given in \cite{kau2}. They also proved that
\begin{align}\label{1.9}
&\left|\left|\left|A^{\frac{\alpha+\beta}{2}}XB^{1-\frac{\alpha+\beta}{2}}+A^{1-\frac{\alpha+\beta}{2}}XB^{\frac{\alpha+\beta}{2}}\right|\right|\right|\notag\\
&\leq\frac{1}{|\beta-\alpha|}\left|\left|\left|\int_{\alpha}^{\beta}(A^\nu XB^{1-\nu}+A^{1-\nu}XB^\nu)d\nu\right|\right|\right|\notag\\
&=\frac{1}{2}\left|\left|\left|A^{\alpha}XB^{1-\alpha}+A^{1-\alpha}XB^{\alpha}+A^{\beta}XB^{1-\beta}+A^{1-\beta}XB^{\beta}\right|\right|\right|.
\end{align}

Heretofore the inequalities discussed above are proved in the setting of  matrices.
Kapil and Singh in \cite{kap2}, using the contractive maps proved that the relation \eqref{1.9} holds for invertible positive operators in $B(H)$.
The aim of this paper is to obtain refinements of the Hermite–Hadamard inequality \eqref{1.9} in the setting of operators (see Theorem \eqref{t2}).
We also present a generalization of the difference version of Heinz inequality (see Theorem \eqref{t1}). At the end, we study the Heinz
operator inequalities, which were proved in \cite{kit} and give a series of refinements of these operator inequalities (see Theorem \eqref{t4} and \eqref{t5}).

\section{norm inequalities for Matrices}

Let $A , B, X\in M_n(\mathbb{C})$ such that $A$ and $B$ be positive definite and $0\leq\nu\leq1$.
A difference version of the Heinz inequality
\begin{equation}\label{2.0.0}
|||A^\nu XB^{1-\nu}-A^{1-\nu}XB^\nu|||\leq |2\nu-1|~|||AX-XB|||
\end{equation}
was proved by Bhatia and Davis in \cite{bah3}.

Kapil, et.al.,\cite{kap1} proved that if $0<r\leq1$. Then
\begin{equation}\label{2.0}
|||A^rX-XB^r|||\leq r\max\{||A^{r-1}||, ||B^{r-1}||\} |||AX-XB|||.
\end{equation}
They also proved that if $\alpha\geq1$, and $\frac{1-\alpha}{2}\leq\nu\leq\frac{1+\alpha}{2}$, then
\begin{align}\label{2.1}
\alpha |||A^\nu XB^{1-\nu}&-A^{1-\nu}XB^\nu|||\notag\\
&\leq |2\nu-1|\max\{||A^{1-\alpha}||, ||B^{1-\alpha}||\} |||A^\alpha X-XB^\alpha|||.
\end{align}

The following theorem is a generalization of \eqref{2.0}.
\begin{theorem}\label{t1}
Let $A , B, X\in M_n(\mathbb{C})$ such that $A$ and $B$ be positive definite and $f$ be an operator monotone function on $(0,\infty)$. Then
\begin{equation}\label{2.2}
|||f(A)X-Xf(B)|||\leq \max\{||f'(A)||, ||f'(B)||\} |||AX-XB|||.
\end{equation}
\end{theorem}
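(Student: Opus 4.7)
The plan is to reduce the desired inequality to a Schur multiplier estimate and then invoke L\"owner's characterization of operator monotone functions. Since the norm $|||\cdot|||$ is unitarily invariant, I may simultaneously diagonalize and assume $A=\mathrm{diag}(\lambda_1,\ldots,\lambda_n)$ and $B=\mathrm{diag}(\mu_1,\ldots,\mu_n)$ with all $\lambda_i,\mu_j>0$. A direct entrywise computation then gives
\[(f(A)X-Xf(B))_{ij}=(f(\lambda_i)-f(\mu_j))\,x_{ij}=c_{ij}\,(AX-XB)_{ij},\]
where $c_{ij}=\dfrac{f(\lambda_i)-f(\mu_j)}{\lambda_i-\mu_j}$ (interpreted as $f'(\lambda_i)$ when $\lambda_i=\mu_j$). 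Hence $f(A)X-Xf(B)=C\circ(AX-XB)$, the Hadamard product of the divided-difference matrix $C=[c_{ij}]$ with $AX-XB$.

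The key input is L\"owner's theorem: since $f$ is operator monotone on $(0,\infty)$, the matrix of divided differences associated to any finite collection of positive numbers is positive semidefinite. Applied to the combined $2n$-element set $\{\lambda_1,\ldots,\lambda_n,\mu_1,\ldots,\mu_n\}$, this produces a positive semidefinite block matrix
\[K=\begin{pmatrix} L_A & C \\ C^{T} & L_B \end{pmatrix}\in M_{2n}(\mathbb{C}),\]
where $L_A$ and $L_B$ are the L\"owner matrices built from the eigenvalues of $A$ and $B$ separately. The diagonal of $K$ lists the values $f'(\lambda_1),\ldots,f'(\lambda_n),f'(\mu_1),\ldots,f'(\mu_n)$.

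Next I would invoke the classical Schur multiplier inequality (Bhatia, \emph{Matrix Analysis}, Thm.~V.5.1): for every positive semidefinite $P$ and every unitarily invariant norm, $|||P\circ T|||\le(\max_k P_{kk})\,|||T|||$. Applying this to $K$ and to the test matrix
\[\widetilde T=\begin{pmatrix} 0 & AX-XB \\ 0 & 0 \end{pmatrix}\in M_{2n}(\mathbb{C})\]
yields $|||K\circ\widetilde T|||\le\max\{\|f'(A)\|,\|f'(B)\|\}\,|||\widetilde T|||$, since $f'\ge 0$ on $(0,\infty)$ (operator monotonicity implies scalar monotonicity) forces $\|f'(A)\|=\max_i f'(\lambda_i)$ and similarly for $B$. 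A block matrix whose only nonzero block sits in the $(1,2)$-position shares its singular values with that block, so both sides reduce to $|||f(A)X-Xf(B)|||$ and $|||AX-XB|||$, giving the required inequality on $M_n(\mathbb{C})$.

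The main delicate point is the degenerate case where some $\lambda_i$ coincides with some $\mu_j$: the divided difference $c_{ij}$ is then a derivative and $K$ acquires repeated rows and columns. I would handle this by a standard continuity argument, replacing $\mu_j$ by $\mu_j+\varepsilon$ and letting $\varepsilon\to 0^+$; both the positive semidefiniteness of $K$ and the final norm estimate are preserved in the limit.
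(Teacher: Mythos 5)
Your proposal is correct and is essentially the paper's own argument: both rest on writing $f(A)X-Xf(B)$ as a Schur (Hadamard) product with the L\"owner divided-difference matrix, invoking its positive semidefiniteness for operator monotone $f$, and then applying the Schur-multiplier bound $|||P\circ T|||\le(\max_k P_{kk})\,|||T|||$. The only cosmetic difference is that the paper first reduces to the case $A=B$ via the block embedding $\begin{pmatrix}A&0\\0&B\end{pmatrix}$, $\begin{pmatrix}0&X\\0&0\end{pmatrix}$, whereas you keep $A$ and $B$ separate and build the $2n\times 2n$ L\"owner matrix $K$ explicitly --- these are the same step written in a different order.
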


\begin{proof}
It suffices to prove the required inequality in the special case which
$A = B$ and $A$ is diagonal. Then the general case follows by replacing $A$ with
$\begin{bmatrix}
  A & 0 \\
  0 & B \\
\end{bmatrix}$ and $X$ with
$\begin{bmatrix}
  0 & X \\
  0 & 0 \\
\end{bmatrix}.$
Therefore let $A=diag(\lambda_i)>0$. Then $f(A)X-Xf(A)=Y\circ (AX-XA)$ where $Y=f^{[1]}(A)$, i.e.,

\begin{equation*}
y_{ij}= \\
\begin{cases}
\frac{f(\lambda_i)-f(\lambda_j)}{\lambda_i-\lambda_j}, &\lambda_i\neq \lambda_j\\
f'(\lambda_i),&\lambda_i=\lambda_j.
\end{cases}
\end{equation*}

By \cite[Theorem V.3.4]{bha1}, $f^{[1]}(A)\geq0$.
Consequently
\begin{align*}
|||f(A)X-Xf(A)|||&=|||Y\circ (AX-XA)|||\leq \max y_{ii}~|||AX-XA|||\\
&=||f'(A)||~|||AX-XA|||.
\end{align*}
\end{proof}

\begin{example}
(i) For the function $f(t)=t^r,~0<r<1$,
\begin{align*}
\left|\left|\left|A^rX-XB^r\right|\right|\right|&\leq r\left(\max\{\|A^{r-1}\|, \|B^{r-1}\|\}\right)\\
&=r\left(\max\{\|A^{-1}\|, \|B^{-1}\|\}\right)^{1-r}|||AX-XB|||.
\end{align*}
(ii) For the function $f(t)=\log t$ on $(0,\infty)$,
\begin{equation*}
\left|\left|\left|\log(A)X-X\log(B)\right|\right|\right|\leq
\left(\max\{\|A^{-1}\|, \|B^{-1}\|\}\right)|||AX-XB|||.
\end{equation*}
\end{example}

\begin{remark}
Let $\alpha\geq1$ and $0\leq\nu\leq1$. From inequality \eqref{2.2} for $A^\alpha, B^\alpha$ and $f(t)=t^\frac{1}{\alpha}$, we get
\begin{equation}\label{2.2.0}
|||AX-XB|||\leq \frac{1}{\alpha}\max\{||A^{1-\alpha}||, ||B^{1-\alpha}||\} |||A^\alpha X-XB^\alpha |||.
\end{equation}
On combining \eqref{2.0.0}, and \eqref{2.2.0}, we obtain \eqref{2.1}.
\end{remark}

\section{ Norm inequalities for operators}

Let $B(H)$ denote the  set of all bounded linear operators on a complex
Hilbert space $H$. An operator $A\in B(H)$
is positive, and we write $A\geq 0$, if $(Ax,x)\geq0$ for every vector $x\in H$. If $A$ and $B$ are self-adjoint
operators, the order relation $A\geq B$ means, as usual, that $A-B$ is a positive operator.

To reach inequalities for bounded self-adjoint operators on Hilbert space, we shall use
the following monotonicity property for operator functions:\\
If $X\in B(H)$ is self adjoint with a spectrum $Sp(X)$, and $f,g$  are continuous real valued functions
on an interval containing $Sp(X)$, then
\begin{equation}\label{3.0}
f(t)\geq g(t),~t\in Sp(X)\Rightarrow ~f(X)\geq g(X).
\end{equation}
For more details about this property, the reader is referred to
\cite{pec}.

Let $L_X, R_Y$ denote the left and right multiplication maps on $B(H)$, respectively, that is,
$L_X(T)=XT$ and $R_Y(T)=TY$. Since $L_X$ and $R_Y$ commute, we have
\[
e^{L_X+R_Y}(T)=e^XTe^Y.
\]

Let $U$ be an invertible positive operator in $B(H)$, then there exists a self-adjoint operator $V \in B(H)$ such that $U=e^V$.
Let $n\in \mathbb{N}$ and $A, B$ be two invertible positive operators in $B(H)$. To simplify computations,
we denote $A$ and $B$ by $e^{2^{n+1}X_1}$ and $e^{2^{n+1}Y_1}$, respectively,
where $X_1$ and $Y_1$ in $B(H)$ are self-adjoint. The corresponding operator map $L_{X_{1}}-R_{Y_{1}}$ is denoted by $D$.
With these notations, we now use the results proved in \cite{kap2, lar} to derive the Hermite–Hadamard type inequalities
for unitarily invariant norms.

The Hermite–Hadamard inequality and various refinements
of it in the setting of operators (resp. matrices) were given in \cite{kap2} (resp. \cite{kau2}).
The following theorem is another generalization of the Hermite–Hadamard inequality for operators.

\begin{theorem}\label{t2}
Let $A , B, X\in B(H)$ such that $A$ and $B$ be invertible positive operators and let $\alpha, \beta$
be any two real numbers and $n,m\in \mathbb{N}$. Let $\gamma(t)=(1-t)\alpha+t\beta$,
\begin{align*}
E_n=\frac{1}{2^{n-1}}\sum_{i=1}^{2^{n-1}} \left(A^{\gamma(\frac{2i-1}{2^n})}X B^{1-\gamma(\frac{2i-1}{2^n})}
+A^{1-\gamma(\frac{2i-1}{2^n})}X B^{\gamma(\frac{2i-1}{2^n})}\right),
\end{align*}
and
\begin{align*}
F_m=\frac{1}{2^{m}}\sum_{i=1}^{2^{m-1}}\left(A^{\gamma(\frac{i-1}{2^{m-1}})}X B^{1-\gamma(\frac{i-1}{2^{m-1}})}
+A^{1-\gamma(\frac{i-1}{2^{m-1}})}X B^{\gamma(\frac{i-1}{2^{m-1}})}\right.\\
\left.+A^{\gamma(\frac{i}{2^{m-1}})}X B^{1-\gamma(\frac{i}{2^{m-1}})}+A^{1-\gamma(\frac{i}{2^{m-1}})}X B^{\gamma(\frac{i}{2^{m-1}})}\right).
\end{align*}

Then
\begin{align}\label{2.2.1}
&\left|\left|\left|A^{\frac{\alpha+\beta}{2}}XB^{1-\frac{\alpha+\beta}{2}}
+A^{1-\frac{\alpha+\beta}{2}}XB^{\frac{\alpha+\beta}{2}}\right|\right|\right|=|||E_1|||\leq \dots \leq |||E_n|||\notag\\
&\leq\frac{1}{|\beta-\alpha|}\left|\left|\left|\int_{\alpha}^{\beta}(A^\nu XB^{1-\nu}+A^{1-\nu}XB^\nu)d\nu\right|\right|\right|\notag\\
&\leq |||F_m|||\leq \dots \leq |||F_1|||\notag\\
&=\frac{1}{2}\left|\left|\left|A^{\alpha}XB^{1-\alpha}+A^{1-\alpha}XB^{\alpha}+A^{\beta}XB^{1-\beta}+A^{1-\beta}XB^{\beta}\right|\right|\right|.
\end{align}
\end{theorem}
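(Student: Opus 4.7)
My plan is to reduce the chain in \eqref{2.2.1} to scalar convexity of $\cosh$ via the exponential representation of the Heinz kernel $g(\nu):=A^\nu XB^{1-\nu}+A^{1-\nu}XB^\nu$. Writing $A=e^S$, $B=e^T$ with $S,T$ bounded self-adjoint, the left and right multiplication maps $L_S,R_T$ on $B(H)$ commute, so $M=L_S+R_T$ and $D=L_S-R_T$ commute as well, and a direct calculation yields
\[
 g(\nu)=2\,e^{M/2}\cosh\!\bigl(\tfrac{2\nu-1}{2}D\bigr)(X),
\]
with $e^{M/2}$ a positive operator on $B(H)$ commuting with every $\cosh(sD)$.

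Under this representation, both $E_n$ and $F_m$ factor as $2\,e^{M/2}\,\Lambda(D)(X)$, with $\Lambda=\Psi_n$ for $E_n$ and $\Lambda=\Phi_m$ for $F_m$; here $\Psi_n(\lambda)$ is the midpoint-rule and $\Phi_m(\lambda)$ the trapezoidal-rule Riemann-sum approximation, at $2^{n-1}$ and $2^{m-1}$ equal subintervals of $[\alpha,\beta]$, for the scalar integrand $\nu\mapsto\cosh(\tfrac{2\nu-1}{2}\lambda)$, each divided by $|\beta-\alpha|$. For every fixed real $\lambda$ this integrand is convex in $\nu$, so the classical Riemann-sum facts for convex functions (midpoint rule increases, trapezoidal rule decreases under equipartition refinement, both converging to the mean) give the non-negative scalar chain
\[
 \Psi_1(\lambda)\le\Psi_2(\lambda)\le\cdots\le\tfrac{1}{|\beta-\alpha|}\!\int_\alpha^\beta\!\cosh\!\bigl(\tfrac{2\nu-1}{2}\lambda\bigr)d\nu\le\cdots\le\Phi_2(\lambda)\le\Phi_1(\lambda),
\]
and substituting $\lambda=D$ via the spectral calculus lifts this to a chain of non-negative operator inequalities on $B(H)$, all commuting with $e^{M/2}$.

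The heart of the argument is then to transfer these positive-operator inequalities to inequalities between the unitarily invariant norms of $2\,e^{M/2}\Psi_n(D)(X)$ and $2\,e^{M/2}\Phi_m(D)(X)$. For the Hilbert-Schmidt norm this is immediate: the squared norm is $4\langle\Lambda(D)^2 e^M X,X\rangle_{HS}$, and the scalar chain together with commutativity makes this quadratic form monotone in the expected way. For a general unitarily invariant norm I would invoke the contractive-map framework of \cite{kap2,lar}: each refinement step from $\Psi_n$ to $\Psi_{n+1}$ (and from $\Phi_m$ to $\Phi_{m+1}$) is realised by composing with a unital, completely positive, norm-contractive integral map of the form $Y\mapsto\tfrac{1}{h}\int_0^h (A^\nu YB^{-\nu}+A^{-\nu}YB^\nu)\,d\nu$ applied to a suitable $2\times 2$ block dilation of $X$. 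The bracketing of both sequences by the central term $\tfrac{1}{|\beta-\alpha|}\,|||\int_\alpha^\beta g(\nu)\,d\nu|||$ is then a uniform application of the operator Hermite--Hadamard inequality \eqref{1.9}. The main obstacle will be precisely this last lifting step from the spectral/Hilbert--Schmidt setting to an arbitrary unitarily invariant norm, where the contractive-map formalism of \cite{kap2,lar} is essential rather than mere spectral calculus.
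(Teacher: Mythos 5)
Your overall architecture coincides with the paper's: write $A,B$ as exponentials, represent each Heinz term as $2\cosh(c\,D)T$ with $T=A^{1/2}XB^{1/2}$ and $D$ the difference of left and right multiplications, recognize $E_n$ and $F_m$ as midpoint and trapezoidal Riemann sums, and appeal to the contractive-map framework of \cite{kap2,lar}. The scalar part of your argument (convexity of $\nu\mapsto\cosh(\tfrac{2\nu-1}{2}\lambda)$, monotonicity of midpoint and trapezoid rules under dyadic refinement) is correct. But there is a genuine gap exactly where you flag ``the main obstacle,'' and your sketch of how to cross it does not work. Pointwise domination of the symbols, $\Psi_n(\lambda)\le\Psi_{n+1}(\lambda)$ for all real $\lambda$, is all that convexity gives you; via the spectral theorem for $D$ acting on the Hilbert--Schmidt class this yields the $\|\cdot\|_2$ case, but for a general unitarily invariant norm an inequality $0\le\Phi(D)\le\Psi(D)$ between positive multipliers does \emph{not} imply $|||\Phi(D)(X)|||\le|||\Psi(D)(X)|||$. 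What is needed is that each successive \emph{ratio} of symbols be a contractive multiplier in the sense of \cite{lar,kap2}, i.e.\ the Fourier transform of a probability measure --- a strictly stronger property than being bounded by $1$. The paper's proof is organized precisely to make this checkable: it derives closed-form product formulas for $E_n$, $F_m$ and the integral $W$ (products of $\cosh(2^{k}(\beta-\alpha)D)$ factors times a common $\cosh(2^{n}(\alpha+\beta-1)D)$ factor), so that the ratios collapse to the specific known contractive functions $x/\sinh x$, $1/\cosh x$, $\tanh(x)/x$ and $\tfrac12\bigl(1+1/\cosh x\bigr)$. Your proposal never produces these ratios, and without them the chain of norm inequalities is unproved for general $|||\cdot|||$.

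Moreover, the one concrete map you do propose for the refinement step, $Y\mapsto\tfrac{1}{h}\int_0^h (A^\nu YB^{-\nu}+A^{-\nu}YB^\nu)\,d\nu$, has symbol $2\sinh(h\lambda)/(h\lambda)\ge 2$; it is an average of the \emph{non}-contractive multipliers $2\cosh(\nu D)$ and is therefore not norm-contractive. The maps actually needed are of the inverse type (e.g.\ $hD/\sinh(hD)$), and establishing their contractivity is the content of \cite[Proposition 21]{lar} and \cite[Proposition 2.4]{kap2}, not a consequence of positivity or complete positivity. To repair the proof you should compute the product representations of $E_n$, $F_m$, $W$ as in the paper and exhibit each consecutive quotient as one of these certified contractive multipliers.
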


\begin{proof}
Put $A=e^{2^{n+1}X_1}$, $B=e^{2^{n+1}Y_1}$ and $T=A^\frac{1}{2}XB^{\frac{1}{2}}$, then
\begin{align*}\vspace{.2cm}
A^{\gamma(\frac{2i-1}{2^n})}X B^{1-\gamma(\frac{2i-1}{2^n})}&+A^{1-\gamma(\frac{2i-1}{2^n})}X B^{\gamma(\frac{2i-1}{2^n})}\\
&=2\cosh\left(2^{n+1}\left(\gamma\Big(\frac{2i-1}{2^{n}}\Big)-\frac{1}{2}\right)D\right)T.
\end{align*}
Similarly, a simple calculation shows
\begin{align*}
&A^{\gamma(\frac{i-1}{2^{n-1}})}X B^{1-\gamma(\frac{i-1}{2^{n-1}})}+A^{1-\gamma(\frac{i-1}{2^{n-1}})}X B^{\gamma(\frac{i-1}{2^{n-1}})}\\
&+A^{\gamma(\frac{i}{2^{n-1}})}X B^{1-\gamma(\frac{i}{2^{n-1}})}+A^{1-\gamma(\frac{i}{2^{n-1}})}X B^{\gamma(\frac{i}{2^{n-1}})}\\
&=2\cosh\left(2^{n}\left(\gamma\Big(\frac{i-1}{2^{n-1}}\Big)-\frac{1}{2}\right)D\right)T+2\cosh\left(2^{n}\left(\gamma\Big(\frac{i}{2^{n-1}}\Big)
-\frac{1}{2}\right)D\right)T.
\end{align*}
Continuing the calculation, we have
\begin{align*}
A^{\gamma(\frac{i-1}{2^{n-1}})}&X B^{1-\gamma(\frac{i-1}{2^{n-1}})}+A^{1-\gamma(\frac{i-1}{2^{n-1}})}X B^{\gamma(\frac{i-1}{2^{n-1}})}\\
&=4\cosh\left(2^{n-1}\left(\gamma\Big(\frac{i-1}{2^{n-1}}\Big)+\gamma\Big(\frac{i}{2^{n-1}}\Big)-1\right)D\right)\\
&\times\cosh\left(2^{n-1}\left(\gamma\Big(\frac{i-1}{2^{n-1}}\Big)-\gamma\Big(\frac{i}{2^{n-1}}\Big)\right)D\right)T\\
&=4\cosh\left(2^{n-1}\left(\gamma\Big(\frac{i-1}{2^{n-1}}\Big)+\gamma\Big(\frac{i}{2^{n-1}}\Big)-1\right)D\right)\\
&\times \cosh((\beta-\alpha)D)T,
\end{align*}
and
\begin{align*}
\frac{2^n}{\beta-\alpha}\int_{\gamma(\frac{i-1}{2^{n}})}^{\gamma(\frac{i}{2^{n}})}&(A^\nu XB^{1-\nu}+A^{1-\nu}XB^\nu)d\nu\\
&=\frac{2^n}{\beta-\alpha}\int_{\gamma(\frac{i-1}{2^{n}})}^{\gamma(\frac{i}{2^{n}})}
2\cosh\left(2^{n+1}\left(\nu-\frac{1}{2}\right)D\right)Td\nu\\
&=\frac{D^{-1}}{\beta-\alpha}\left[\sinh\left(2^{n+1}\left(\gamma\Big(\frac{i}{2^{n}}\Big)
-\frac{1}{2}\right)D\right)\right.\\
&\left.-\sinh\left(2^{n+1}\left(\gamma\Big(\frac{i-1}{2^{n}}\Big)-\frac{1}{2}\right)D\right)\right]T.
\end{align*}
Consequently,
\begin{align*}
\frac{2^n}{\beta-\alpha}\int_{\gamma(\frac{i-1}{2^{n}})}^{\gamma(\frac{i}{2^{n}})}&(A^\nu XB^{1-\nu}+A^{1-\nu}XB^\nu)d\nu\\
&=\frac{2D^{-1}}{\beta-\alpha}\cosh\left(2^{n}\left(\gamma\Big(\frac{i-1}{2^{n}}\Big)+\gamma\Big(\frac{i}{2^{n}}\Big)-1\right)D\right)\\
&\times\sinh\left(2^{n}\left(\gamma\Big(\frac{i}{2^{n}}\Big)-\gamma\Big(\frac{i-1}{2^{n}}\Big)\right)D\right)T\\
&=\frac{2D^{-1}}{\beta-\alpha}\cosh\left(2^{n}\left(\gamma\Big(\frac{i-1}{2^{n}}\Big)+\gamma\Big(\frac{i}{2^{n}}\Big)-1\right)D\right)\\
&\times\sinh((\beta-\alpha)D)T.
\end{align*}
Calculus computations show that for $n\geq2$, we have

\begin{align*}
E_n&=\frac{1}{2^{n-2}}\sum_{i=1}^{2^{n-1}}\cosh\left(2^{n+1}\left(\gamma\Big(\frac{2i-1}{2^{n}}\Big)-\frac{1}{2}\right)D\right)T\\
&=\frac{1}{2^{n-2}}\left[\sum_{i=1}^{2^{n-2}}\cosh\left(2^{n+1}\left(\gamma\Big(\frac{2i-1}{2^{n}}\Big)-\frac{1}{2}\right)D\right)\right.\\
&\left.+\frac{1}{2^{n-2}}\sum_{i=1+2^{n-2}}^{2^{n-1}}\cosh\left(2^{n+1}\left(\gamma\Big(\frac{2i-1}{2^{n}}\Big)-\frac{1}{2}\right)D\right)\right]T\\
&=\frac{1}{2^{n-2}}\sum_{i=1}^{2^{n-2}}\left[\cosh\left(2^{n+1}\left(\gamma\Big(\frac{2i-1}{2^{n}}\Big)-\frac{1}{2}\right)D\right)\right.\\
&\left.+\cosh\left(2^{n+1}\left(\gamma\Big(1-\frac{2i-1}{2^{n}}\Big)-\frac{1}{2}\right)D\right)\right]T\\
&=\frac{1}{2^{n-3}}\sum_{i=1}^{2^{n-2}}\left[\cosh\left(2^{n}\left(\gamma\Big(\frac{2i-1}{2^{n}}\Big)+\gamma\Big(1-\frac{2i-1}{2^{n}}\Big)-1\right)D\right)\right.\\
&\left.\times \cosh\left(2^{n}\left(\gamma\Big(\frac{2i-1}{2^{n}}\Big)-\gamma\Big(1-\frac{2i-1}{2^{n}}\Big)\right)D\right)\right]T.
\end{align*}
Using the relations $\gamma(t)+\gamma(1-t)=\alpha+\beta$ and $\gamma(t)-\gamma(1-t)=(2t-1)(\beta-\alpha)$, we obtain

\begin{align}\label{2.3}
E_n&=\frac{1}{2^{n-3}}\cosh\left(2^{n}(\alpha+\beta-1)D\right)\sum_{i=1}^{2^{n-2}}\cosh\left(2^{n}\Big(\frac{2i-1}{2^{n-1}}-1\Big)(\beta-\alpha)D\right)T\notag\\
&=\frac{1}{2^{n-3}}\cosh\left(2^{n}(\alpha+\beta-1)D\right)\sum_{i=1}^{2^{n-2}}\cosh\left(2(2i-1)(\beta-\alpha)D\right)T\notag\\
&=2\cosh\left(2^{n}(\alpha+\beta-1)D\right)\prod_{i=1}^{n-1} \cosh\left(2^{n-i}(\beta-\alpha)D\right)T.
\end{align}
Similarly, by simple calculations, we obtain
\begin{align}\label{2.4}
&F_{n+1}=\frac{1}{2^{n-2}}\sum_{i=1}^{2^{n-1}}\cosh\left(2^{n-1}\left(\gamma\Big(\frac{i-1}{2^{n-1}}\Big)
+\gamma\Big(\frac{i}{2^{n-1}}\Big)-1\right)D\right)\cosh((\beta-\alpha)D)T\notag\\
&=\cosh\left(2^{n}(\alpha+\beta-1)D\right)\prod_{i=1}^{n-1} \cosh\left(2^{n-i}(\beta-\alpha)D\right)\Big(\cosh(2(\beta-\alpha)D)+1\Big)T,
\end{align}
and
\begin{align}\label{2.5}
&W:=\frac{1}{\beta-\alpha}\int_{\alpha}^{\beta}(A^\nu XB^{1-\nu}+A^{1-\nu}XB^\nu)d\nu\notag\\
&=\frac{2D^{-1}}{\beta-\alpha}\sum_{i=1}^{2^n}\cosh\left(2^{n}\left(\gamma\Big(\frac{i-1}{2^{n}}\Big)
+\gamma\Big(\frac{i}{2^{n}}\Big)-1\right)D\right)\sinh\left((\beta-\alpha)D\right)T\notag\\
&=\frac{2D^{-1}}{\beta-\alpha}\cosh\left(2^{n}(\alpha+\beta-1)D\right)\prod_{i=1}^n \cosh\left(2^{n-i}(\beta-\alpha)D\right)
\sinh\left((\beta-\alpha)D\right)T\notag\\
&=\frac{D^{-1}}{2^{n-1}(\beta-\alpha)}\cosh\left(2^{n}(\alpha+\beta-1)D\right)\sinh\left(2^{n}(\beta-\alpha)D\right)T.
\end{align}
 By \cite[Proposition 21]{lar}, the operator map $\frac{2(\beta-\alpha)D}{\sinh(2(\beta-\alpha)D)}$ is contractive,
 so from equality \eqref{2.3} and \eqref{2.5}, we obtain
 \begin{equation}\label{2.6}
 |||E_n|||\leq |||W|||.
\end{equation}
From equality \eqref{2.3} for $E_{n-1}$ with $A=e^{2^{n+1}X_1}, B=e^{2^{n+1}Y_1}$, we get
\begin{align*}
E_{n-1}=2\cosh\left(2^{n}(\alpha+\beta-1)D\right)\prod_{i=1}^{n-2} \cosh\left(2^{n-i}(\beta-\alpha)D\right)T.
\end{align*}
The operator map $\frac{1}{\cosh(2(\beta-\alpha)D)}$ is contractive, so
\begin{equation}\label{2.7}
|||E_{n-1}|||\leq |||E_n|||.
\end{equation}

 By \cite[Proposition 2.4]{kap2}, the operator map $\frac{\sinh((\beta-\alpha)D)}{(\beta-\alpha)D\cosh((\beta-\alpha)D)}$ is contractive, therefore
 from equality \eqref{2.4} and \eqref{2.5}, we get
 \begin{equation}\label{2.8}
 |||W|||\leq |||F_{n+1}|||.
\end{equation}

 From equality \eqref{2.5} for $n=2$, i.e., for $A=e^{8X_1}, B=e^{8Y_1}$, we have
 $$W=\frac{D^{-1}}{2(\beta-\alpha)}\cosh\left(4(\alpha+\beta-1)D\right)\sinh\left(4(\beta-\alpha)D\right)T$$
and $$F_2=\cosh\left(4(\alpha+\beta-1)D\right)\Big(\cosh\left(4(\beta-\alpha)D\right)+1\Big)T.$$
In this case, we also get $|||W|||\leq |||F_{2}|||$ because the operator map\\
 $\frac{\sinh(2(\beta-\alpha)D)}{2(\beta-\alpha)D\cosh(2(\beta-\alpha)D)}$ is contractive.

From equality \eqref{2.4} for $F_{n}$ with $A=e^{2^{n+1}X_1}, B=e^{2^{n+1}Y_1}$, we get
\begin{align*}
F_{n}=\cosh\left(2^{n}(\alpha+\beta-1)D\right)\prod_{i=1}^{n-2} &\cosh\left(2^{n-i}(\beta-\alpha)D\right)\\
&\times\Big(\cosh(4(\beta-\alpha)D)+1\Big)T.
\end{align*}
Therefore
\begin{align*}
\frac{F_{n+1}}{F_{n}}&=\frac{\cosh\left(2(\beta-\alpha)D\right)(1+\cosh\left(2(\beta-\alpha)D\right))}{1+\cosh\left(4(\beta-\alpha)D\right)}\\
&=\frac{1}{2}\left(\frac{1}{\cosh(2(\beta-\alpha)D)}+1\right),
\end{align*}
and this implies that
\begin{equation}\label{2.9}
|||F_{n+1}|||\leq |||F_{n}|||.
\end{equation}

From \eqref{2.6}, \eqref{2.7}, \eqref{2.8} and \eqref{2.9}, we obtain the relation \eqref{2.2.1} and the proof is completed.
\end{proof}

\begin{theorem}\label{t3}
Let $A , B, X\in B(H)$ such that $A$ and $B$ be invertible positive operators. Let $\frac{1}{4}\leq\nu\leq\frac{3}{4}$
and $\alpha\in[\frac{1}{2}, \infty)$. Then
\begin{align}\label{2.10}
\frac{1}{2}|||A^\nu XB^{1-\nu}+A^{1-\nu}XB^\nu|||&\leq \left|\left|\left|\int_0^1 A^tXB^{1-t} dt \right|\right|\right|\\
&\leq \left|\left|\left|(1-\alpha)A^{\frac{1}{2}}XB^{\frac{1}{2}}+\alpha\left(\frac{AX+XB}{2}\right)\right|\right|\right|.\notag
\end{align}
\end{theorem}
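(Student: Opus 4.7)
The plan is to reuse the framework developed in the proof of Theorem \ref{t2}. I would set $A=e^{2X_1}$, $B=e^{2Y_1}$ with $X_1,Y_1$ self-adjoint in $B(H)$, let $D=L_{X_1}-R_{Y_1}$, and put $T=A^{1/2}XB^{1/2}$. Using $e^{L_{X_1}+R_{Y_1}}X=T$ and the identity $2\nu L_{X_1}+2(1-\nu)R_{Y_1}=(L_{X_1}+R_{Y_1})+(2\nu-1)D$, direct calculation (the same manipulations used in Theorem \ref{t2}, with the substitution $s=2t-1$ in the integral) gives
\begin{align*}
\tfrac12\bigl(A^\nu XB^{1-\nu}+A^{1-\nu}XB^\nu\bigr)&=\cosh\bigl((2\nu-1)D\bigr)T,\\
\int_0^1 A^tXB^{1-t}\,dt&=\frac{\sinh D}{D}\,T,\\
(1-\alpha)A^{1/2}XB^{1/2}+\alpha\,\frac{AX+XB}{2}&=\bigl((1-\alpha)+\alpha\cosh D\bigr)T.
\end{align*}

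With these identities, the two inequalities in \eqref{2.10} are equivalent, respectively, to the contractivity on every unitarily invariant norm of the operator maps
\begin{equation*}
\frac{D\cosh\bigl((2\nu-1)D\bigr)}{\sinh D}\qquad\text{and}\qquad\frac{\sinh D}{D\bigl((1-\alpha)+\alpha\cosh D\bigr)}.
\end{equation*}
As in the proof of Theorem \ref{t2}, each such map is contractive whenever the associated scalar function is bounded in modulus by $1$ on $\mathbb{R}$ (this is the principle used via \cite{lar,kap2}). So my job reduces to two scalar inequalities.

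For the first map, I need $\cosh((2\nu-1)x)\le\sinh x/x$ for all $x\in\mathbb{R}$ and $\nu\in[\tfrac14,\tfrac34]$. Specialising Drissi's inequality \eqref{1.2} to $a=e^x$, $b=e^{-x}$ yields $H_\nu(e^x,e^{-x})=\cosh((2\nu-1)x)$ and $L(e^x,e^{-x})=\sinh x/x$, so the required bound holds for all $\nu\in[\tfrac{\sqrt 3-1}{2\sqrt 3},\tfrac{\sqrt 3+1}{2\sqrt 3}]$; since $[\tfrac14,\tfrac34]$ is contained in this interval, the left-hand inequality of \eqref{2.10} follows.

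For the second map, I need $\sinh x\le x\bigl((1-\alpha)+\alpha\cosh x\bigr)$ for $x\ge 0$ (both sides are odd, extending the inequality to $\mathbb{R}$). Set $h_\alpha(x):=x\bigl((1-\alpha)+\alpha\cosh x\bigr)-\sinh x$; then $h_\alpha(0)=0$ and
\begin{equation*}
h_\alpha'(x)=\alpha x\sinh x-(1-\alpha)(\cosh x-1).
\end{equation*}
The elementary fact $\cosh x-1\le x\sinh x$ for $x\ge 0$ (both sides vanish at $0$, and the derivative inequality $\sinh x\le\sinh x+x\cosh x$ is trivial), combined with $1-\alpha\le\alpha$ from $\alpha\ge\tfrac12$, gives $h_\alpha'(x)\ge 0$ on $[0,\infty)$, hence $h_\alpha\ge 0$ there. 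This yields the right-hand inequality of \eqref{2.10}.

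The substantive step is the first scalar inequality, where the specific range $[\tfrac14,\tfrac34]$ is recovered only because it lies inside Drissi's sharp interval $[\tfrac{\sqrt3-1}{2\sqrt3},\tfrac{\sqrt3+1}{2\sqrt3}]$; the rest is routine once the $\cosh/\sinh$ calculus of Theorem \ref{t2} is in place.
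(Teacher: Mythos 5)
Your setup and the three $\cosh/\sinh$ identities are exactly the ones the paper uses (the paper's proof of Theorem \ref{t3} derives the first two and then cites \cite{lar} and \cite[Theorem 3.9]{kap2}), but the step where you pass from the identities to the norm inequalities contains a genuine error. You assert that an operator map of the form $g(D)$ is contractive for every unitarily invariant norm ``whenever the associated scalar function is bounded in modulus by $1$ on $\mathbb{R}$,'' and you attribute this principle to \cite{lar,kap2}. That is not the principle those papers use, and it is false in general: the mechanism behind \cite[Proposition 21]{lar} and \cite[Proposition 2.4]{kap2} is that the scalar symbol is \emph{positive definite}, i.e.\ the Fourier transform of a probability measure $\mu$, so that $f(D)T=\int e^{isD}\,g(D)T\,d\mu(s)$ is an average of unitary conjugates of $g(D)T$. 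A sup-norm bound on the symbol controls only the Hilbert--Schmidt norm (where the Schur multiplier norm equals the sup of the entries); for general unitarily invariant norms it gives nothing.

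The tell is your own closing remark: your argument would establish the first inequality for all $\nu$ in Drissi's interval $[\frac{\sqrt3-1}{2\sqrt3},\frac{\sqrt3+1}{2\sqrt3}]\approx[0.211,0.789]$, which strictly contains $[\frac14,\frac34]$. But the function $x\mapsto \frac{x\cosh((2\nu-1)x)}{\sinh x}$ is positive definite precisely when $|2\nu-1|\le\frac12$, i.e.\ $\nu\in[\frac14,\frac34]$ (note $\frac{x\cosh(x/2)}{\sinh x}=\frac{x/2}{\sinh(x/2)}$ is the boundary case); outside that range the scalar inequality of Drissi still holds but the contractivity for all unitarily invariant norms is exactly what breaks down. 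So the hypothesis $\frac14\le\nu\le\frac34$ is not ``recovered because it sits inside Drissi's interval''--- it is the positive-definiteness threshold, and your proof as written does not see it. The same objection applies to your treatment of the second inequality: the calculus showing $\sinh x\le x((1-\alpha)+\alpha\cosh x)$ for $\alpha\ge\frac12$ is correct but insufficient; one must know that $\frac{\sinh x}{x((1-\alpha)+\alpha\cosh x)}$ is positive definite, which is the actual content of \cite[Theorem 3.9]{kap2} that the paper invokes. To repair the proof, replace the boundedness claims with the positive-definiteness statements (or simply cite \cite[Proposition 21]{lar} and \cite[Theorem 3.9]{kap2} as the paper does); alternatively, restrict your argument to the Hilbert--Schmidt norm, where it is valid.
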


\begin{proof}
Suppose that $A=e^{2X_1}, B=e^{2Y_1}$ and $T=A^\frac{1}{2}XB^\frac{1}{2}$,then
\begin{align*}
\frac{1}{2}|||A^\nu XB^{1-\nu}+A^{1-\nu}XB^\nu|||=|||\cosh \big((2\nu-1)D\big)T|||,
\end{align*}
and
\begin{align*}
\left|\left|\left|\int_0^1 A^tXB^{1-t} dt \right|\right|\right|=\left|\left|\left|\int_0^1 \exp\big((2t-1)D\big)T dt \right|\right|\right|
=\left|\left|\left| D^{-1}\sinh(D)T\right|\right|\right|.
\end{align*}
By \cite[Proposition 21]{lar}, the operator map $\frac{D\cosh \big((2\nu-1)D\big)}{\sinh(D)}$ is contractive. This proves the first inequality
 in \eqref{2.10}. The second inequality in \eqref{2.10} was proved in Theorem 3.9 of \cite{kap2}.

\end{proof}

\section{Improved Heinz operator inequalities}

Let $A,B\in B(H)$ be two positive operators and $\nu\in [0,1]$, then the
$\nu$-weighted arithmetic mean of $A$ and $B$ denoted by
$A\nabla_{\nu} B$, is defined as $A\nabla_{\nu} B=(1-\nu)A+\nu B$.
If $A$ is invertible, the $\nu$-geometric mean of $A$ and $B$
denoted by $A\sharp_{\nu} B$ is defined as $A\sharp_{\nu}
B=A^{\frac{1}{2}}(A^{\frac{-1}{2}}BA^{\frac{-1}{2}})^{\nu}
A^{\frac{1}{2}}$. For more detail,
see Kubo and Ando \cite {kub}. When $v=\frac{1}{2}$ , we write
$A\nabla B$, $A\sharp B$, for brevity, respectively.

Let $A,B\in B(H)$ be two invertible positive (strictly positive) operators and $\nu\in [0,1]$. The
operator version of the Heinz means are defined by
\begin{equation*}
 H_{\nu}(A,B)=\frac{A\sharp_{\nu} B+A\sharp_{1-\nu} B}{2},
 \end{equation*}
 and the operator version of the Heron means are defined by
\begin{equation*}
 K_\nu(A,B)=(1-\nu)(A\sharp B)+\nu( A\nabla B).
 \end{equation*}
 Zhao et al. in \cite{zha} gave an inequality
for the Heinz-Heron means as follows:\\
$$H_{\nu}(A,B)\leq K_{\alpha(\nu)}(A,B),$$
where $\alpha(\nu)=1-4(\nu-\nu^2)$.\\

It is easy to show that the above Heinz mean $H_\nu(\cdot,\cdot)$ interpolates between the non-weighted
arithmetic mean and geometric mean, that is
\begin{equation}\label{3.1}
A\sharp B\leq H_\nu(A,B)\leq A\nabla B.
\end{equation}

Kittaneh and Krni\'c in \cite{kit2} obtained the some refinements of the left and right inequalities in \eqref{3.1} for $\nu\in[0,1]-\{\frac{1}{2}\}$, as follows:
\begin{align}\label{3.2}
A\sharp B&\leq H_{\frac{2\nu+1}{4}}(A,B)\leq\frac{1}{2\nu-1}A^{\frac{1}{2}}F_\nu(A^{\frac{-1}{2}}BA^{\frac{-1}{2}})A^{\frac{1}{2}}\notag\\
&\leq \frac{1}{4}H_\nu(A,B)+\frac{1}{2}H_{\frac{2\nu+1}{4}}(A,B)+\frac{1}{4}A\nabla B\notag\\
&\leq \frac{1}{2}H_\nu(A,B)+\frac{1}{2}A\sharp B+\leq H_\nu(A,B),
\end{align}
and
\begin{align}\label{3.3}
H_\nu(A,B)&\leq H_{\frac{r_0}{2}}(A,B)\leq \frac{1}{2r_0}A^{\frac{1}{2}}\left[F_1(A^{\frac{-1}{2}}BA^{\frac{-1}{2}})+F_{r_0}(A^{\frac{-1}{2}}BA^{\frac{-1}{2}})\right]A^{\frac{1}{2}}\notag\\
&\leq \frac{1}{4}H_\nu(A,B)+\frac{1}{2}H_{\frac{r_0}{2}}(A,B)+\frac{1}{4}A\nabla B\\
&\leq \frac{1}{2}H_\nu(A,B)+\frac{1}{2}A\nabla B\leq A\nabla B,\notag
\end{align}
where $r_0=\min\{\nu,1-\nu\}$ and

\begin{equation}\label{3.4}
F_\nu(x)= \\
\begin{cases}
\frac{x^\nu -x^{1-\nu}}{\log x}, &x>0, x\neq 1\\
2\nu-1,& x=1.
\end{cases}
\end{equation}

Let $f,\alpha,\beta$ be continuous real functions on $\mathbb{R}$ and $f$ be convex. Let $\alpha(\nu)<\beta(\nu)~(\nu\in\mathbb{R})$, and
$\gamma_\nu(t)=(1-t)\alpha(\nu)+t\beta(\nu)$. For $ n\in \mathbb{N}$, Define
\begin{alignat}{2}\label{3.5}
\varphi_n(f,\nu)&=\frac{1}{2^{n-1}}\sum_{i=1}^{2^{n-1}}f\left(\left(1-\frac{2i-1}{2^n}\right)\alpha(\nu)+\frac{2i-1}{2^n}\beta(\nu)\right)\quad &&(\nu\in\mathbb{R})\notag\\
&=\frac{1}{2^{n-1}}\sum_{i=1}^{2^{n-1}}f\left(\gamma_\nu\left(\frac{2i-1}{2^n}\right)\right).
\end{alignat}
For $m\in\mathbb{N}$, we define
\[
\Phi_1(\nu)=\frac{f(\alpha(\nu))+f(\beta(\nu))}{2},
\]
and for $m\geq 1$
\begin{alignat}{2}\label{3.6}
\Phi_{m+1}(f,\nu)&=\frac{1}{2^{m+1}}\left[f(\alpha(\nu))+f(\beta(\nu))+2\sum_{i=1}^{2^{m}-1}f\left(\left(1-\frac{i}{2^{m}}\right)\alpha(\nu) +\frac{i}{2^{m}}\beta(\nu)\right)\right]\notag\\
&=\frac{1}{2^{m+1}}\left[f(\alpha(\nu))+f(\beta(\nu))+2\sum_{i=1}^{2^{m}-1}f\left(\gamma_\nu\left(\frac{i}{2^{m}}\right)\right)\right].
\end{alignat}
It can be easily shown that for every $n, m\in\mathbb{N}$, the sequence $(\varphi_n),~(\text{resp}.(\Phi_m))$ is an increasing (resp. a decreasing) sequence of continuous functions such that

\begin{align}\label{3.7}
f\left(\frac{\alpha+\beta}{2}\right)\leq\phi_n(f,\nu)\leq \frac{1}{\beta-\alpha}\int_{\alpha}^{\beta} f(t)dt
\leq\Phi_m(f,\nu)\leq\frac{f(\alpha)+f(\beta)}{2}
\end{align}

and
\begin{equation}\label{3.8}
\lim_{n\rightarrow\infty}\varphi_n(f,\nu)=\lim_{m\rightarrow\infty}\Phi_m(f,\nu)=\frac{1}{\beta-\alpha}\int_{\alpha}^{\beta} f(t)dt.
\end{equation}
Now, we consider the function $f_x:[0,1]\rightarrow \mathbb{R}$, $x>0$, by
\begin{equation}\label{3.9}
f_x(t)=\frac{x^t+x^{1-t}}{2},
\end{equation}
and $0\leq\alpha(\nu)<\beta(\nu)\leq1$.
The functions $\varphi_n(f_x,\nu)$ and $\Phi_n(f_x,\nu)$ are continuous functions of $x$.
If $A,B\in B(H)$ are two invertible positive operators, using the functional calculus at $x=A^{\frac{-1}{2}}BA^{\frac{-1}{2}}$ for $\varphi_n(f_x,\nu)$,
we have
 \begin{align}\label{3.9.1}
\varphi_n(f_{A^{\frac{-1}{2}}BA^{\frac{-1}{2}}},\nu)=\frac{1}{2^{n-1}}
\sum_{i=1}^{2^{n-1}}\frac{(A^{\frac{-1}{2}}BA^{\frac{-1}{2}})^{\gamma_\nu\left(\frac{2i-1}{2^n}\right)}
+(A^{\frac{-1}{2}}BA^{\frac{-1}{2}})^{1-\gamma_\nu\left(\frac{2i-1}{2^n}\right)}}{2}.
\end{align}
Multiplying \eqref{3.9.1} by $A^\frac{1}{2}$ on the left and right sides, we get

 \begin{align}\label{3.10}
A^\frac{1}{2}\varphi_n(f_{A^{\frac{-1}{2}}BA^{\frac{-1}{2}}},\nu)A^\frac{1}{2}=\frac{1}{2^{n-1}}\sum_{i=1}^{2^{n-1}}H_{\gamma_\nu\left(\frac{2i-1}{2^n}\right)}(A,B).
\end{align}
We denote it by $\varphi_{n}(\alpha,\beta; A, B)$. Similarly,

\begin{align}\label{3.11}
\Phi_{m+1}(\alpha,\beta; A,B)&:=A^\frac{1}{2}\Phi_{m+1}(f_x,\nu)A^\frac{1}{2}\\
&=\frac{1}{2^{m+1}}\left[H_{\alpha(\nu)}(A,B)+H_{\beta(\nu)}(A,B)+2\sum_{i=1}^{2^{m}-1}H_{\gamma_\nu\left(\frac{i}{2^{m}}\right)}(A,B)\right].\notag
\end{align}

In the following Theorem we give a series of refinements of \eqref{3.2}.

\begin{theorem}\label{t4}
Let $n,m\in\mathbb{N}$ and $n>1, m>2$. If $A,B\in B(H)$ are two invertible positive operators, then the series of  inequalities holds
\begin{align}\label{3.12}
A\sharp B&\leq H_{\frac{2\nu+1}{4}}(A,B)=\varphi_{1}\left(\nu,\frac{1}{2};A,B\right)\leq\varphi_{n}\left(\nu,\frac{1}{2};A,B\right)\notag\\
&\leq\frac{1}{2\nu-1}A^{\frac{1}{2}}F_\nu(A^{\frac{-1}{2}}BA^{\frac{-1}{2}})A^{\frac{1}{2}}\leq \Phi_{m}\left(\nu,\frac{1}{2};A,B\right)\notag\\
&\leq\Phi_{2}\left(\nu,\frac{1}{2};A,B\right)=\frac{1}{4}H_\nu(A,B)+\frac{1}{2}H_{\frac{2\nu+1}{4}}(A,B)+\frac{1}{4}A\sharp B\notag\\
&\leq \frac{1}{2}H_\nu(A,B)+\frac{1}{2}A\sharp B+\leq H_\nu(A,B),
\end{align}
for all $\nu\in[0,1]-\{\frac{1}{2}\}$, where $F_\nu$ is the function given in \eqref{3.4}.

\end{theorem}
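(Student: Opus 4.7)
The plan is to reduce the operator inequalities in \eqref{3.12} to the scalar Hermite--Hadamard refinement chain \eqref{3.7} applied to the convex function $f_x$ from \eqref{3.9}, and then to lift the resulting pointwise inequalities to operators by the continuous functional calculus at $x = A^{-1/2}BA^{-1/2}$ followed by conjugation on both sides by $A^{1/2}$.

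First I would fix $\nu \in [0,1]\setminus\{1/2\}$. Using the symmetry $f_x(t)=f_x(1-t)$, I may assume $\nu < 1/2$ and take $\alpha(\nu)=\nu$, $\beta(\nu)=1/2$; the case $\nu > 1/2$ is handled identically by swapping the roles of $\alpha$ and $\beta$. The function $f_x(t)=(x^t+x^{1-t})/2$ is convex on $[0,1]$ as a positive combination of the convex exponentials $t\mapsto x^t$ and $t\mapsto x^{1-t}$, so the chain \eqref{3.7} applies. For every $x>0$ and integers $n>1$, $m>2$, this gives
\begin{equation*}
f_x(1/2) \;\leq\; \varphi_1(f_x,\nu) \;\leq\; \varphi_n(f_x,\nu) \;\leq\; \frac{1}{1/2-\nu}\int_{\nu}^{1/2} f_x(t)\,dt \;\leq\; \Phi_m(f_x,\nu) \;\leq\; \Phi_2(f_x,\nu).
\end{equation*}

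Next I would make the key terms explicit. A direct antiderivative yields
\begin{equation*}
\frac{1}{1/2-\nu}\int_\nu^{1/2}\frac{x^t+x^{1-t}}{2}\,dt \;=\; \frac{x^{1-\nu}-x^\nu}{(1-2\nu)\log x} \;=\; \frac{F_\nu(x)}{2\nu-1},
\end{equation*}
where the second equality uses the definition \eqref{3.4} and the identity $(x^\nu-x^{1-\nu})/(2\nu-1)=(x^{1-\nu}-x^\nu)/(1-2\nu)$. One also checks directly from \eqref{3.5}--\eqref{3.6} that $\varphi_1(f_x,\nu)=f_x((2\nu+1)/4)$ and $\Phi_2(f_x,\nu)=\tfrac14 f_x(\nu)+\tfrac14 f_x(1/2)+\tfrac12 f_x((2\nu+1)/4)$. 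I would then plug in the positive operator $x=A^{-1/2}BA^{-1/2}$: the scalar inequalities pass to operator inequalities by the monotonicity principle \eqref{3.0}, and conjugating by $A^{1/2}$ on the left and right preserves them. The identities $A^{1/2}(A^{-1/2}BA^{-1/2})^t A^{1/2}=A\sharp_t B$ and $H_t(A,B)=(A\sharp_t B+A\sharp_{1-t}B)/2$, together with \eqref{3.10} and \eqref{3.11}, identify every conjugated expression with its counterpart in \eqref{3.12}, thereby producing all the inequalities from $A\sharp B$ up through $\Phi_2(\nu,1/2;A,B)$.

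The final two steps are elementary. Subtracting $\tfrac14 H_\nu(A,B)+\tfrac14 A\sharp B$ from each side reduces the bound $\Phi_2(\nu,1/2;A,B)\leq \tfrac12 H_\nu(A,B)+\tfrac12 A\sharp B$ to $H_{(2\nu+1)/4}(A,B)\leq \tfrac12 H_\nu(A,B)+\tfrac12 A\sharp B$, which follows from the midpoint convexity $f_x((2\nu+1)/4)\leq \tfrac12 f_x(\nu)+\tfrac12 f_x(1/2)$ transferred by the same scalar-to-operator procedure. The last bound $\tfrac12 H_\nu(A,B)+\tfrac12 A\sharp B \leq H_\nu(A,B)$ is just $A\sharp B\leq H_\nu(A,B)$, i.e.\ the left inequality in \eqref{3.1}. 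The one nontrivial point of bookkeeping is the sign of $1/(2\nu-1)$ as $\nu$ crosses $1/2$; this is precisely what the symmetry $f_x(t)=f_x(1-t)$ and the identity above neutralize, so the chain extends uniformly to all $\nu\in[0,1]\setminus\{1/2\}$.
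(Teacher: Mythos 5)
Your proposal is correct and follows essentially the same route as the paper: apply the scalar Hermite--Hadamard refinement chain \eqref{3.7} to the convex function $f_x$ with $\alpha(\nu)=\nu$, $\beta(\nu)=\tfrac12$, transfer it to the operator $A^{-1/2}BA^{-1/2}$ via the monotonicity principle \eqref{3.0}, and conjugate by $A^{1/2}$ using \eqref{3.10}--\eqref{3.11}. You simply spell out more of the bookkeeping (the explicit antiderivative identifying the integral with $F_\nu(x)/(2\nu-1)$, the identification of $\varphi_1$ and $\Phi_2$, and the final two convexity steps) that the paper leaves implicit.
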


\begin{proof}
Let $0\leq \nu<\frac{1}{2}$. Applying inequality \eqref{3.7} to the function $f_x$ and $\alpha(\nu)=\nu, \beta(\nu)=\frac{1}{2}$, we get

\begin{align}\label{3.13}
f_x\left(\frac{2\nu+1}{4}\right)&\leq\phi_n(f_x,\nu)\leq \frac{2}{1-2\nu}\int_{\nu}^{\frac{1}{2}} f(t)dt\notag\\
&\leq\Phi_m(f_x,\nu)\leq\frac{f_x(\nu)+f_x(\frac{1}{2})}{2}.
\end{align}

Clearly, $\varphi_{n}(\alpha,\beta; A, B)=\varphi_{n}(\beta,\alpha; A, B)$ and $\Phi_{m}(\alpha,\beta; A, B)=\Phi_{m}(\beta,\alpha; A, B)$
since $H_{1-\nu}(A,B)=H_\nu(A,B)$. Therefore \eqref{3.13} also holds for $\frac{1}{2}<\nu\leq1$ because $F_{1-\nu}(x)=-F_\nu(x)$.

Utilizing of the monotonicity property \eqref{3.0}, the relation \eqref{3.13} holds when $x$ is
replaced with the positive operator $A^{\frac{-1}{2}}BA^{\frac{1}{2}}$.
Finally, multiplying both sides of such obtained series of inequalities by $A^\frac{1}{2}$ and applying \eqref{3.10}
and \eqref{3.11}, we deduced the inequalities \eqref{3.12}.

\end{proof}

In the following Theorem we give a series of refinements of \eqref{3.3}.

\begin{theorem}\label{t5}
Let $1\leq n,m\in\mathbb{N}$ and $\nu\in[0,1]-\{\frac{1}{2}\}$. If $A,B\in B(H)$ are two invertible positive operators, then the series of  inequalities holds
\begin{align}\label{3.14}
&H_\nu(A,B)\leq H_{\frac{r_0}{2}}(A,B)\leq \varphi_{n}(0,r_0;A,B)\notag\\
&\leq \frac{1}{2r_0}A^{\frac{1}{2}}\left[F_1(A^{\frac{-1}{2}}BA^{\frac{-1}{2}})
+F_{r_0}(A^{\frac{-1}{2}}BA^{\frac{-1}{2}})\right]A^{\frac{1}{2}}\leq\Phi_{m}(0,r_0;A,B)\notag\\
&\leq \frac{1}{4}H_\nu(A,B)+\frac{1}{2}H_{\frac{r_0}{2}}(A,B)+\frac{1}{4}A\nabla B\\
&\leq \frac{1}{2}H_\nu(A,B)+\frac{1}{2}A\nabla B\leq A\nabla B,\notag
\end{align}
where $r_0=\min\{\nu,1-\nu\}$ and $F_\nu$ is the function given in \eqref{3.4}.
\end{theorem}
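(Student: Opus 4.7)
The plan is to mirror the proof of Theorem \ref{t4}, but now applying the scalar Hermite--Hadamard refinement chain \eqref{3.7} to the convex function $f_x$ of \eqref{3.9} with the choice $\alpha(\nu)=0$ and $\beta(\nu)=r_0=\min\{\nu,1-\nu\}$. Once the desired chain of inequalities is established as a pointwise inequality in $x>0$, the operator version follows by the monotonicity principle \eqref{3.0} at $x=A^{-1/2}BA^{-1/2}$, followed by conjugation with $A^{1/2}$ on both sides, exactly as in the derivation of \eqref{3.10} and \eqref{3.11}. Thus the whole argument rests on identifying the scalar endpoint expressions in \eqref{3.7} with the operator quantities appearing in \eqref{3.14}.

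The key scalar identifications are straightforward computations. First, $f_x(r_0/2)=\tfrac{1}{2}(x^{r_0/2}+x^{1-r_0/2})$, which after conjugation with $A^{1/2}$ yields $H_{r_0/2}(A,B)$. Next, a direct integration gives
$$\frac{1}{r_0}\int_0^{r_0}f_x(t)\,dt=\frac{1}{2r_0\log x}\bigl[(x^{r_0}-1)+(x-x^{1-r_0})\bigr]=\frac{1}{2r_0}\bigl(F_1(x)+F_{r_0}(x)\bigr),$$
using the formula \eqref{3.4}, so this matches the middle term in \eqref{3.14}. Finally,
$$\Phi_2(f_x,\nu)=\tfrac{1}{4}\bigl(f_x(0)+f_x(r_0)+2f_x(r_0/2)\bigr)=\tfrac{1}{4}\cdot\tfrac{1+x}{2}+\tfrac{1}{2}\cdot\tfrac{x^{r_0/2}+x^{1-r_0/2}}{2}+\tfrac{1}{4}\cdot\tfrac{x^{r_0}+x^{1-r_0}}{2},$$
which becomes $\tfrac14 H_\nu(A,B)+\tfrac12 H_{r_0/2}(A,B)+\tfrac14 A\nabla B$ after conjugation, using $H_{r_0}(A,B)=H_\nu(A,B)$ since the Heinz mean is symmetric under $\nu\mapsto 1-\nu$.

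With these identifications in hand, the central chain $H_{r_0/2}(A,B)\leq\varphi_n(0,r_0;A,B)\leq\frac{1}{2r_0}A^{1/2}[F_1+F_{r_0}](A^{-1/2}BA^{-1/2})A^{1/2}\leq \Phi_m(0,r_0;A,B)\leq \Phi_2(0,r_0;A,B)$ is immediate from \eqref{3.7} together with the fact that $(\Phi_m)$ is a decreasing sequence. For the two outer inequalities, one first notes that $H_\nu(A,B)\leq H_{r_0/2}(A,B)$ follows from the symmetry of the Heinz mean about $1/2$ and convexity of $t\mapsto f_x(t)$ on $[0,1]$ (so $t\mapsto H_t(A,B)$ is monotonically decreasing on $[0,1/2]$, and $r_0/2\leq r_0\leq 1/2$); while the final inequalities $\tfrac12 H_\nu(A,B)+\tfrac12 A\nabla B\leq A\nabla B$ and $\Phi_2\leq \tfrac12 H_\nu+\tfrac12 A\nabla B$ both reduce, after trivial algebra, to the standard bound $H_\nu(A,B)\leq A\nabla B$ coming from \eqref{3.1}.

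The only subtlety, analogous to that handled in the proof of Theorem \ref{t4}, is the case $\nu>\tfrac12$: here $r_0=1-\nu$ and the scalar chain must first be proved for $\nu\in[0,\tfrac12)$ and then extended. This extension uses $F_{1-\nu}(x)=-F_\nu(x)$ from \eqref{3.4} together with $H_{1-\nu}(A,B)=H_\nu(A,B)$, so the expressions in \eqref{3.14} are invariant under the reflection $\nu\mapsto 1-\nu$ and the general case reduces to $\nu<\tfrac12$. I do not anticipate any genuine obstacle; the main care is bookkeeping the sign of $\log x$ and of $F_\nu(x)$ when integrating, and ensuring that the positivity of $A^{1/2}[\,\cdot\,]A^{1/2}$ preserves the direction of every inequality --- which it does, since each bound from \eqref{3.7} is a pointwise scalar inequality valid for all $x>0$ and hence passes through the functional calculus of the positive operator $A^{-1/2}BA^{-1/2}$ via \eqref{3.0}.
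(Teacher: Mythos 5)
Your proposal is correct and follows essentially the same route as the paper: apply the scalar Hermite--Hadamard chain \eqref{3.7} to $f_x$ with $\alpha(\nu)=0$, $\beta(\nu)=r_0$, reduce to $0\leq\nu<\tfrac{1}{2}$ by the symmetry of the Heinz mean and $F_{1-\nu}=-F_\nu$, and transfer to operators via \eqref{3.0} and conjugation by $A^{\frac{1}{2}}$ exactly as in Theorem~\ref{t4}, with your explicit endpoint computations simply filling in what the paper leaves to ``the same argument.'' (One cosmetic quibble: the step $\Phi_2\leq\tfrac{1}{2}H_\nu+\tfrac{1}{2}A\nabla B$ is just $\Phi_2\leq\Phi_1$, i.e.\ midpoint convexity of $f_x$ on $[0,r_0]$, rather than a consequence of $H_\nu\leq A\nabla B$.)
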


\begin{proof}
By the symmetry of the Heinz means and the fact that $F_{1-\nu}=-F_\nu$, it is sufficient that, we prove \eqref{3.14} for $0\leq \nu<\frac{1}{2}$.
 Applying inequality \eqref{3.7} to the function $f_x$ and $\alpha(\nu)=0, \beta(\nu)=r_0=\min\{\nu,1-\nu\}=\nu$, we get

\begin{align}\label{3.15}
f_x\left(\frac{\nu}{2}\right)&\leq\phi_n(f_x,\nu)\leq \frac{1}{\nu}\int_{0}^{\nu} f(t)dt\notag\\
&\leq\Phi_m(f_x,\nu)\leq\frac{f_x(0)+f_x(\nu)}{2}.
\end{align}

By the same argument used in the proof of Theorem \ref{t4},
we obtain the inequalities \eqref{3.14}.

\end{proof}

%{\bf Acknowledgments.}  The authors would like to express their thanks to referees
%for careful reading and kind suggestion.

%********************************************************************************************************************************************************

\bibliographystyle{amsplain}

\end{document}